\newtheorem{theorem}{Theorem}[section]
\newtheorem{lemma}[theorem]{Lemma}
\newtheorem{proposition}[theorem]{Proposition}
\theoremstyle{definition}
\newtheorem{remark}[theorem]{Remark}
\newtheorem{definition}[theorem]{Definition}
\newcommand{\F}{\mathsf{F}}
\newcommand{\PSL}{\mathsf{PSL}}
\newcommand{\oct}{\mathcal{O}}
\newcommand{\mouf}{\mathbb{M}}
\newcommand{\PO}{{\mathbb{P}}^2(\oct)}
\newcommand{\hatPO}{\hat{\mathbb{P}}^2(\oct)}
\newcommand{\hatPOk}{\hat{\mathbb{P}}^2(\oct_k)}
\newcommand{\HO}{\mathcal{H}(\oct_3)}
\newcommand{\HOk}{\mathcal{H}((\oct_k)_3)}
\DeclareMathOperator{\ch}{char}
\DeclareMathOperator{\tr}{T}
\DeclareMathOperator{\T}{T}
\DeclareMathOperator{\N}{N}
\DeclareMathOperator{\Fix}{Fix}
\DeclareMathOperator{\id}{id}
\DeclareMathOperator{\Aut}{Aut}
\DeclareMathOperator{\Sym}{Sym}
\DeclareMathOperator{\Cent}{Cent}
\numberwithin{equation}{section}
\begin{document}

\title{Moufang sets arising from polarities of Moufang planes over octonion division algebras}

\author{Elizabeth Callens \and Tom De Medts}

\date{\today}

\maketitle

\vspace*{-5ex}

\begin{abstract}
For every octonion division algebra $\oct$, there exists a projective plane which is parametrized by $\oct$;
these planes are related to rank two forms of linear algebraic groups of absolute type $E_6$.
We study all possible polarities of such octonion planes having absolute points, and their corresponding Moufang set.

It turns out that there are four different types of polarities, giving rise to
(1) Moufang sets of type $F_4$,
(2) Moufang sets of type $^2\!E_6$,
(3) hermitian Moufang sets of type $C_4$, and
(4) projective Moufang sets over a $5$-dimensional subspace of an octonion division algebra.
Case (3) only occurs over fields of characteristic different from two,
whereas case (4) only occurs over fields of characteristic equal to two.
The Moufang sets of type $^2\!E_6$ that we obtain in case (2) are exactly those corresponding to linear algebraic groups of type $^2\!E^{29}_{6,1}$;
the explicit description of those Moufang sets was not yet known.
\end{abstract}

\noindent
{\bf Keywords:} Moufang sets, projective planes, polarities, linear algebraic groups, $^2\!E_6$, octonion algebras, Albert algebras.

\noindent
{\bf MSC2010:} {\em Primary:} 51E15, 20G41; {\em Secondary:} 17C40, 20G15, 51E24.

%%%%%%%%%%%%%%%%%%%%%%%%%%%%%%%%%%%%%%%%%%%%%%%%%%%%%%%%%%%%%%%%%%%%%%%%%%%%%%%%%%%%%%%%%%%%%%%%%%%%%%%%%%%%%%%%%%
%                                                                                                                %
%  SECTION : INTRODUCTION                                                                                        %
%                                                                                                                %
%%%%%%%%%%%%%%%%%%%%%%%%%%%%%%%%%%%%%%%%%%%%%%%%%%%%%%%%%%%%%%%%%%%%%%%%%%%%%%%%%%%%%%%%%%%%%%%%%%%%%%%%%%%%%%%%%%
\section{Introduction}

Moufang sets were introduced by Jacques Tits in \cite{Durham} as an axiomization of the isotropic simple algebraic
groups of relative rank one, and they are, in fact, the buildings corresponding to these algebraic groups,
together with some of the group structure (which comes from the root groups of the algebraic group).
In this way, the Moufang sets are a powerful tool to study these algebraic groups.

In this paper, we study Moufang sets arising from polarities of Moufang planes $\PO$, with $\oct$ an octonion division algebra.
(A {\em polarity} is a duality of order two, i.e.\@ an incidence-preserving but not type-preserving automorphism of $\PO$ whose square is the identity.)
Both F.D. Veldkamp \cite{Unitarygroups}, and more recently N. Knarr and M. Stroppel \cite{knarrstroppel2, knarrstroppel} studied these polarities. 

Veldkamp gave a classification of all polarities with absolute points in characteristic different from two.
He showed that there are only three different types of polarities:
\begin{compactenum}[(1)]
    \item
	the standard polarity;
    \item
	a polarity that only exists if the center $E$ of $\oct$ is a separable quadratic extension of some smaller field $k$; and
    \item
	a polarity arising from an automorphism of order $2$ fixing a sub-quaternion algebra (which relies explicitly on the fact that the characteristic is different from two).
\end{compactenum}
On the other hand, N. Knarr and M. Stroppel give a full classification of all polarities of octonion planes in all characteristics.
It turns out that the first two types described by Veldkamp exist as well in characterstic two, but there is also an additional polarity that only exists when
the characteristic is equal to $2$. 
(The paper \cite{knarrstroppel} also deals with polarities having no absolute points or exactly one absolute point, but these cases do not give rise to Moufang sets.)
At the end of \cite{knarrstroppel}, two open questions appear;
one of them is to determine the centralizer in $\Aut(\mathcal{P}_2(\oct))$ for each of the polarities they describe.
This question is closely related to determining the Moufang sets corresponding to these polarities. 

Our goal is to describe all the Moufang sets that arise from these polarities, thereby answering this question.
More specifically, we obtain these Moufang sets by looking at the natural action of the centralizer of the polarity in the automorphism group of the octonion plane. We find that the two types of polarities that do exist in all characteristics give rise to Moufang sets of type $F_4$ ---arising from the standard polarity, as described by T. De Medts and H. Van Maldeghem \cite{MoufF4}--- and (new) Moufang sets of type $^2\!E_6$, corresponding to forms of algebraic groups of type $^2\!E^{29}_{6,1}$.
The polarity only existing in characteristic different from two corresponds to a class of hermitian Moufang sets,
while the polarity only existing in characteristic two induces projective Moufang sets over a $5$-dimensional subspace of $\oct$.

In this paper, we first give two different descriptions of octonion planes. In the next section we describe the (already known) types of Moufang sets that correspond to these polarities. Next, we give a general procedure how to construct these Moufang sets from polarities of the Moufang plane. 
Finally, we discuss for each of these polarities what their corresponding Moufang set is and construct an isomorphism between already existing types of Moufang sets. We find that one of the polarities results in a new type of Moufang sets, and we show that these Moufang sets are of algebraic origin.
More specifically, we obtain that the centralizer of the polarity is an algebraic group of type~$^2\!E^{29}_{6,1}$.

\section{Octonion planes}

We start by explaining the basic objects that we will use, namely projective planes coordinatized by an octonion division algebra.
We will refer to such planes as {\em octonion planes} or as {\em Moufang planes}.
(The latter terminology is perhaps somewhat ambiguous, since there are of course many other projective planes with the Moufang property,
but it is customary to refer to the non-desarguesian projective planes with the Moufang property as {\em Moufang planes}.)

We describe octonion planes $\PO$, where $\oct$ is an octonion division algebra with center $k$, in two different ways as a point-line incidence geometry $(\mathcal{P},\mathcal{L},*)$.

The first way is the most natural way to describe octonion planes. The point set $\mathcal{P}$ consists of three different types of points. Points of the first type are elements of the form $(a,b)$ with $a,b\in\oct$, points of the second type are $(c)$ with $c\in\oct$ and the last type is only one point which we denote by $(\infty)$.

Similarly, there are three types of lines. The first type consists of the elements $[m,k]$ with $m,k\in\oct$, lines of the second type are elements $[l]$ with $l\in\oct$, the third is the line $[\infty]$.

The incidence relation $*$ between points and lines is as follows:
\begin{alignat*}{2}
	(a,b)&*[m,k]
		& \quad & \iff ma+b=k, \\
	(a,b)&*[l]
		&& \iff a=l,  \\
	(c)&*[m,k]
		&& \iff c=m, \\
	(c)&*[\infty]
		&& \ \text{ for all } c\in\oct,  \\
	(\infty)&*[l] && \ \text{ for all } l\in\oct, \\
	(\infty)&*[\infty].
\end{alignat*}

For the second description of the octonion plane, which we denote by $\hatPO$ to avoid confusion, we define $\oct_3$ as the vector space of $3\times 3$ matrices with entries in $\oct$. The set $\HO$ is then defined as the subspace of $\oct_3$ consisting of elements $x$ of the form
\[
x = \begin{pmatrix}
\alpha_1& -a_3 & \overline{a_2}\\
\overline{a_3}&\alpha_2 & a_1\\
 a_2& -\overline{a_1} & \alpha_3 
\end{pmatrix}
\]
with $\alpha_i\in k$ and $a_i\in\oct$ for all $i\in\{1,2,3\}$.
We will often abbreviate the matrix element $x$ as $(\alpha_1,\alpha_2,\alpha_3; a_1,a_2,a_3)$.
The space $\HO$ is a {\em cubic norm structure}, with norm $N \colon \HO\to k$ given by
\begin{multline*}
	N(\alpha_1,\alpha_2,\alpha_3; a_1,a_2,a_3) \\
	= \alpha_1\alpha_2\alpha_3 + \alpha_1N(a_1)-\alpha_2N(a_2)+\alpha_3N(a_3)-T(a_1a_2a_3),
\end{multline*}
trace $T \colon \HO \times \HO \to k$ given by
\begin{multline*}
	T\bigl((\alpha_1,\alpha_2,\alpha_3; a_1,a_2,a_3), (\beta_1,\beta_2,\beta_3; b_1,b_2,b_3) \bigr) \\
	= \textstyle\sum_{i=1}^3 \alpha_i\beta_i-T(a_1\overline{b_1})+T(a_2\overline{b_2})-T(a_3\overline{b_3}),
\end{multline*}
and with adjoint map $\sharp \colon \HO \to \HO$ given by
\begin{multline*}
	(\alpha_1,\alpha_2,\alpha_3; a_1,a_2,a_3)^\sharp \\
	= \bigl( \alpha_2\alpha_3+N(a_1),\alpha_1\alpha_3-N(a_2), \alpha_1\alpha_2+N(a_3); \\
	-\overline{a_2a_3}-\alpha_1a_1,\overline{a_3a_1}-\alpha_2a_2,-\overline{a_1a_2}-\alpha_3a_3 \bigr),
\end{multline*}
where on the right-hand side $N$ and $T$ are the norm and trace map of $\oct$.
Together with the quadratic maps $U_x \colon \HO\to \HO$ (for each $x$ in $\HO$) given by
\[ U_x(y) = T(x,y)x-x^{\sharp}\times y, \]
the space $\HO$ becomes a {\em quadratic Jordan algebra} over $k$, which we will denote by $J(\HO)$.

We will now describe the projective plane $\hatPO$.
The set of points $\mathcal{\hat{P}}$ consists of elements $(x)$ with $x\in\HO$, $x\neq 0$ and $x^\sharp= 0$.
The line set $\mathcal{\hat{L}}$ consists similarly of elements $[x]$, with $x\in\HO$ different from $0$ and satisfying $x^\sharp = 0$.

We define an incidence relation $\hat{*}$ by
\[
(x)\ \hat{*}\ [y]\Longleftrightarrow T(x,y)=0.
\]
Next, we define a map $\phi$ between the point and line sets of $\PO$ and $\hatPO$:
\begin{align*}
 (a,b)&\mapsto (N(b),-N(a),1;a,\overline{b},b\overline{a})\\
(c)&\mapsto (N(c),-1,0;0,0,-c)\\
(\infty)&\mapsto (1,0,0;0,0,0)\\
[m,k]&\mapsto [-1,N(m),-N(k);-\overline{m}k,\overline{k},m]\\
[l]&\mapsto [0,1,-N(l);-l,0,0]\\
[\infty]&\mapsto [0,0,1;0,0,0]
\end{align*}
One can show that $\phi$ is an isomorphism of projective planes. 
For a more detailed description, we refer to \cite[Section 3]{MoufF4}.

\section{Moufang sets}

In this section, we recall some of the basics of Moufang sets, and we refer to \cite{course} for more details.

A {\em Moufang set} $\mouf = \bigl( X, (U_x)_{x \in X} \bigr)$ is a set $X$ together with a collection of groups
$U_x \leq \Sym(X)$, such that for each $x \in X$:
\begin{compactenum}[(1)]
    \item
	$U_x$ fixes $x$ and acts sharply transitively on $X \setminus \{ x \}$;
    \item
	$U_x^\varphi = U_{x\varphi}$ for all $\varphi \in G := \langle U_z \mid z \in X \rangle$.
\end{compactenum}
The group $G$ is called the {\em little projective group} of the Moufang set.

A typical example is given by the group $G = \PSL(2,k)$ acting on the projective line $X = \mathbb{P}^1(k) = k \cup \{ \infty \}$.

\subsection{An explicit construction of Moufang sets}\label{alternativedefinition}

We will now explain how any Moufang set can be reconstructed from a single root group together with one additional permutation \cite{MoufJor}.

Let $(U, +)$ be a group, with identity $0$, and where the operation $+$ is {\em not necessarily commutative}.
Let $X = U \cup \{ \infty \}$, where $\infty$ is a new symbol.
For each $a\in U$, we define a map $\alpha_a \in \Sym(X)$ by setting
\begin{equation}\label{eq:alpha}
	\alpha_a \colon \begin{cases}
		\infty \mapsto \infty \\
		x \mapsto x+a & \text{ for all $x \in U$}.
	\end{cases}
\end{equation}
Let
\begin{equation*}
U_\infty := \{\alpha_a\mid a\in U\} \,.
\end{equation*}
Now let $\tau$ be a permutation of $U^* := U \setminus \{ 0 \}$.
We extend $\tau$ to an element of $\Sym(X)$ (which we also denote by $\tau$) by setting $0^\tau=\infty$ and $\infty^\tau=0$.
Next we set
\begin{equation}\label{eq:defU}
	U_0 := U_\infty^\tau \text{ and } U_a := U_0^{\alpha_a}
\end{equation}
for all $a\in U$, where $U_x^\varphi$ denotes conjugation inside $\Sym(X)$. Let
\begin{equation}\label{eq:mouf}
	\mouf(U,\tau) := \bigl( X,(U_x)_{x\in X} \bigr)
\end{equation} 
and let
\begin{equation*}
	G := \langle U_\infty,U_0 \rangle = \langle U_x \mid x \in X \rangle \,.
\end{equation*}
Then $\mouf(U, \tau)$ is not always a Moufang set, but every Moufang set can be obtained in this way.
Note that, for a given Moufang set, the map $\tau$ is certainly not unique: different choices for $\tau$ can give rise to the same Moufang set.

\subsection{Projective Moufang sets}\label{projectivemoufangset} 

The simplest class of examples of Moufang sets are the {\em projective Moufang sets} $\PSL_2(D)$, which we now describe;
we refer to \cite[Section 5]{course} for more details.

Let $(D, +, \cdot)$ be an {\em alternative division ring}, i.e.\@ a not necessarily associative ring (with $1$)
such that for each $a\in D^*$ there exists some element $a^{-1}\in D^*$ for which $a\cdot a^{-1}b = b = ba^{-1}\cdot a$ for every $b\in D$.
By the Bruck--Kleinfeld theorem, every alternative division ring is either associative (hence a skew field), or it is an octonion division algebra.

Now let $U = (D,+)$ be the additive group of $D$, and define the following permutation on $U^*$:
\[
\tau \colon U^*\to U^* \colon x\mapsto -x^{-1}.
\]
Then $\mathbb{M}(U,\tau)$ is a Moufang set, which we will denote by $\mathbb{M}(D)$, and which is often referred to as the {\em projective Moufang set over $D$}
or the {\em projective line over $D$}.

This construction can be generalized to arbitrary Jordan division algebras \cite{MoufJor}, but we will not need this here.

\subsection{Hermitian Moufang sets}\label{se:herm}

Let K be a field or a skew-field, let $\sigma$ be an involution of $K$, let $V$ be a right vector space over $K$ and let 
\[
K^{-}_{\sigma}=\{a-a^{\sigma}\mid a\in K\}.
\]
A map $q$ from $V$ to $K$ is a hermitian pseudoquadratic form on $V$ with respect to $\sigma$ if there is a form $h$ on $V$ which is hermitian with respect to $\sigma$ such that $q$ and $h$ satisfy
\begin{compactenum}[(i)]
\item $q(a+b)\equiv q(a)+q(b)+h(a,b) \mod K^{-}_{\sigma}$,
\item $q(at)=t^{\sigma}q(a)t \mod K^{-}_{\sigma}$
\end{compactenum}
for all $a,b\in V$ and $t\in K$. 
We say that $q$ is anisotropic if
\begin{compactenum}[(i)]
\setcounter{enumi}{2}
\item
$q(a)\equiv 0 \mod K^{-}_{\sigma}$ \ only if $a=0$.
\end{compactenum}

Let $(T,\cdot)$ denote the group with underlying set
\[
\{(a,t)\in V\times K\mid q(a)-t\in K^{-}_{\sigma}\}
\]
with $(a,t)\cdot (b,u) := (a+b,t+u+h(b,a))$.
Define a permutation $\tau$ on $T^*$ by setting
\begin{equation}\label{eq:tauherm}
	\tau(a,t)=(at^{-1},t^{-1}).
\end{equation}
Then $\mouf(T,\tau)$ is a Moufang set.
Moufang sets obtained in this way are called {\em hermitian Moufang sets}.
For a more detailed description, we refer to \cite[(11.15), (11.16) and (16.18)]{TitsWeiss}.

\subsection{Moufang sets of type $F_4$}

Let $\oct$ be an octonion division algebra over a commutative field $k$. Let  $x \mapsto \overline{x}$ be the standard involution, $N$ the multiplicative norm with $N(x)=x\cdot \overline{x}$ and $T$ the trace map $T(x)=x+\overline{x}$ on $\oct$. We define a set $U$ with
\[
U:=\{(a,b)\in\oct\times \oct\mid N(a)+T(b)=0\}
\]
and the following (non-abelian) group operation on $U$:
\[
(a,b)+(c,d):= (a+c,b+d-\overline{c}\cdot a)
\]
for all $(a,b),\ (c,d)\in U$.
Define a permutation $\tau$ on $U^*$, by setting
\[
\tau(a,b)=(-a\cdot b^{-1}, b^{-1})
\]
for all $(a,b)\in U^*$.
Then $\mathbb{M}(U,\tau)$ is a Moufang set, which we call a {\em Moufang set of type $F_4$} since it is the Moufang set arising from a linear algebraic group
of relative rank one which is of absolute type $F_4$. We refer to \cite{MoufF4} for more details.

%\section{Twisted algebraic groups -- Construction}\label{twist}
%We show how to construct in general the twisted group from a non-twisted one. We refer to \cite{Brussel} and \cite[Section 3]{Selbach}  for more information about the process of twisting. Suppose $G$ is a linear algebraic defined over a field $k$...
%and let $K$ be Galois extension of the field $k$. Then the twisted group is a group $G'$ such that $G(K)=G'(K)$ but both groups have a different Galois action. More specifically, the non-twisted group has... Nog aanvullen cocykels???

\section{The polarities of $\PO$ and $\hatPO$}

In this paragraph, we investigate all polarities of $\hatPO$ (and thus of $\PO$ as well) having at least three absolute points.
Our goal is to describe each type of polarity together with the associated Moufang set.
We were inspired by an article of Veldkamp \cite{Unitarygroups}, which considers polarities with absolute points together with their associated groups (over fields of characteristic different from $2$).
We will extend these results to include fields of characteristic $2$, and we will translate some of his result into the more modern language of Moufang sets.
We first give a general approach for constructing Moufang sets from polarities of Moufang planes.
We will use a similar method as in \cite[Section 5]{MoufF4}.

\subsection{Recovering Moufang sets from polarities of the Moufang plane}\label{Reconstruction}

\begin{definition}
	Let $\Delta$ be a projective plane (or more generally, a generalized polygon) with point set $\mathcal{P}$ and line set $\mathcal{L}$.
	A map $\Psi \colon \mathcal{P} \cup \mathcal{L} \to \mathcal{P} \cup \mathcal{L}$ is called a {\em polarity} of $\Delta$ if:
	\begin{compactitem}
	    \item
		$\Psi(\mathcal{P}) = \mathcal{L}$ and $\Psi(\mathcal{L}) = \mathcal{P}$;
	    \item
		$\Psi$ preserves incidence, i.e.\@
		$p * L \iff \Psi(L) * \Psi(p)$ for all $p \in \mathcal{P}, L \in \mathcal{L}$;
	    \item
		$\Psi^2 = 1$.
	\end{compactitem}
	An element $x \in \mathcal{P} \cup \mathcal{L}$ is called {\em absolute} if $x * \Psi(x)$.
	Similarly, a flag $(p,L)$ with $p * L$ is called an {\em absolute flag} if $L = \Psi(p)$ (and consequently also $p = \Psi(L)$).
\end{definition}

Now suppose $\Psi$ is a polarity of $\PO$ having at least three absolute points and $G$ is the {\em little projective group} of $\PO$,
i.e.\@ the subgroup of $\Aut(\PO)$ generated by the root groups (or equivalently, generated by all elations).

First, we determine the set $X$ of absolute points and the subgroup $\Cent_{G}(\Psi)$ of $G$, which is the group of all elements of $G$ that commute with~$\Psi$.
For every element $\sigma\in\Cent_G(\Psi)$ and every $x\in X$, the image $\sigma(x)$ is again an absolute point.
In this way, we obtain a natural action of $\Cent_G(\Psi)$ on the set of all absolute points (or equivalently, of all absolute flags) of the polarity $\Psi$.
As the following result shows, this gives rise to a Moufang set.

This result seems to be well known, and is used in \cite{MVM} for instance,
but we could not find a proof in the literature.
It turns out that the details are somewhat more intricate than one would expect;
we are grateful to Hendrik Van Maldeghem for discussing the details with us.
\begin{proposition}\label{pr:moufpol}
	Let $\Delta$ be a Moufang $n$-gon, and let $G$ be a subgroup of $\Aut(\Delta)$ containing all root groups.
	Let $\Psi$ be a polarity of $\Delta$, and let $X$ be the set of absolute flags of $\Psi$;
	assume that $|X| \geq 3$.
	Let $C = \Cent_G(\Psi)$; then $C$ acts on $X$.
	Let $K$ be the kernel of this action, and let $\overline{C} = C/K$.
	Then:
	\begin{compactenum}[\rm (i)]
	    \item
		For each $x \in X$, let $U_x$ be the intersection of $\overline{C}$ with the unipotent radical
		$U_+ = U_1 \dotsm U_n$ of $\Delta$ with respect to the pair $(\Sigma, x)$, where
		$\Sigma$ is an (arbitrary) apartment of $\Delta$ containing $x$.
		Then $\bigl(X, (U_x)_{x \in X}\bigr)$ is a Moufang set;
		its little projective group is a normal subgroup of $\overline{C}$.
	    \item
		If either $n$ is even, or $n=3$ and each non-absolute line through a given absolute point of $\Delta$
		contains a second absolute point,
		then $K = 1$, and hence the little projective group of $X$ is a normal subgroup of $C = \Cent_G(\Psi)$ itself.
	    \item
		If $\Delta$ is a Moufang polygon arising from a linear algebraic group of relative rank two,
		then $X$ is the Moufang set of a linear algebraic group of relative rank one.
	\end{compactenum}
\end{proposition}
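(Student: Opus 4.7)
The plan is to dispatch the three items separately, with the bulk of the work going into (i). For the \textbf{setup}, I would first observe that an absolute flag $(p, \Psi(p))$ is a chamber pointwise fixed by $\Psi$, since $\Psi^2 = \id$ swaps the two components of the flag. Hence $X$ is precisely the set of $\Psi$-fixed chambers of $\Delta$, and every $\sigma \in C$ permutes $X$ because $\Psi\sigma(x) = \sigma\Psi(x) = \sigma(x)$ whenever $x$ is $\Psi$-fixed. By definition of $K$, the quotient $\overline{C} = C/K$ acts faithfully on $X$.

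\textbf{Part (i).} The heart of the argument is sharp transitivity of $U_x$ on $X \setminus \{x\}$. Fix $x,y\in X$ with $x\neq y$. I would first argue that $x$ and $y$ are opposite chambers of $\Delta$: in the projective plane case this follows because an absolute tangent line $\Psi(p)$ meets the set of absolute points only at $p$, and an analogous statement holds for polarities of Moufang polygons with $|X|\geq 3$. Then I take the unique apartment $\Sigma$ through $x$ and $y$; since $\Psi$ fixes both chambers, it stabilizes $\Sigma$, and since $\Psi$ fixes $x$ it permutes the $n$ positive roots and normalizes $U_+ = U_+(\Sigma,x)$. The Moufang property provides a unique $u \in U_+$ with $u(x) = y$. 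Conjugating, $\Psi u \Psi$ is again in $U_+$ and sends $x$ to $y$, so uniqueness forces $\Psi u \Psi = u$, i.e.\@ $u \in C$, hence $u \in U_x$. Fixity of $x$ by $U_x$ is immediate. For the conjugation axiom, given $\varphi \in \langle U_z : z \in X\rangle \subseteq \overline{C}$,
\[
U_x^{\varphi} = \bigl(\overline{C}\cap U_+(\Sigma,x)\bigr)^{\varphi} = \overline{C}\cap U_+(\varphi\Sigma,\varphi x) = U_{\varphi x},
\]
and independence of $U_{\varphi x}$ from the apartment choice is a by-product: the same construction yields a unique unipotent element for each $y'\in X\setminus\{\varphi x\}$ regardless of which $\Psi$-invariant apartment is used. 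The little projective group $\langle U_z : z \in X\rangle$ is then normal in $\overline{C}$ because conjugation by any $\overline{\sigma} \in \overline{C}$ permutes the generators via $U_z \mapsto U_{\sigma z}$.

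\textbf{Parts (ii) and (iii).} For (ii), let $\kappa \in K$; by assumption $\kappa$ fixes every absolute chamber. When $n$ is even, I would use $|X|\geq 3$ to extract three pairwise opposite absolute chambers whose two associated apartments are pointwise fixed by $\kappa$; the standard rigidity of Moufang $n$-gons with $n$ even (as in Tits--Weiss) then forces $\kappa = 1$. When $n=3$ and every non-absolute line through an absolute point carries a second absolute point, $\kappa$ fixes at least two absolute points on each non-absolute line through an absolute point, hence fixes such lines pointwise; combining with the absolute (tangent) lines fixed at their points of tangency, enough of the incidence geometry is pinned down to conclude $\kappa = 1$. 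For (iii), the polarity $\Psi$ corresponds to a $k$-involution $\widetilde{\Psi}$ of the relative rank two algebraic group $\mathbf{G}$ attached to $\Delta$; the connected centralizer $\Cent_{\mathbf{G}}(\widetilde{\Psi})^{\circ}$ is a reductive $k$-group whose relative root datum is the folding of that of $\mathbf{G}$ by $\widetilde{\Psi}$, and hence has relative rank one, its associated Moufang set coinciding with the one built in (i). The main obstacle is the sharp transitivity step in (i); the decisive idea is passing to the $\Psi$-invariant apartment $\Sigma(x,y)$, after which Moufang uniqueness inside $U_+$ automatically forces the transporting element to centralize $\Psi$.
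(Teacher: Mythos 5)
Your part (i) is, at its core, the same argument as the paper's: pass to the apartment $\Sigma$ determined by two absolute flags, note that $\Psi$ normalizes $U_+=U_+(\Sigma,x)$ (it reverses the order of the root groups $U_1,\dots,U_n$), and use sharp transitivity of $U_+$ on the chambers opposite $x$ to force the transporting element to centralize $\Psi$. But as written the key step is broken: you invoke ``the unique $u\in U_+$ with $u(x)=y$'', which is impossible because $U_+(\Sigma,x)$ fixes the chamber $x$; moreover your argument only ever involves the two flags $x$ and $y$, so it cannot establish transitivity on all of $X\setminus\{x\}$. What is needed (and what the paper does) is: fix $x,y$ and $\Sigma=\Sigma(x,y)$, let $z\in X\setminus\{x\}$ be \emph{arbitrary}, take the unique $u\in U_+$ with $u(z)=y$, and observe that $u^{-1}u^{\Psi}$ lies in $U_+$ and fixes the opposite chamber $y$, hence is trivial, so $u\in\Cent(\Psi)$. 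With that repair your (i), including the normality statement, coincides with the paper's proof.

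The genuine gap is in part (ii) for $n$ even. Fixing three pairwise opposite chambers, and hence pointwise the apartments they pairwise span, does \emph{not} force an automorphism to be trivial: torus elements of a Moufang polygon (e.g.\ $\diag(1,t,t^2)$ in $\PSL_3(k)$, and its analogues for quadrangles and hexagons) fix entire apartments pointwise. There is no rigidity statement of the kind you invoke, so reducing from ``$\kappa$ fixes all of $X$'' to three chambers loses exactly what makes the claim true. The paper instead uses the full set $X$: for $n$ even the absolute flags form an ovoid-spread pair (every flag of $\Delta$ induces an absolute flag at the middle of the minimal path from $p$ to $p^{\Psi}$), and a collineation fixing an ovoid pointwise is trivial. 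Your $n=3$ case also has a slip --- fixing two points of a line fixes the line, not the line pointwise; the correct conclusion is that $\kappa$ fixes all lines through $p$ and (dually, via commutation with $\Psi$) all points of $p^{\Psi}$, so it is a central collineation with center $p$ and axis $p^{\Psi}$ fixing an additional absolute point, hence trivial. For (iii), your route via the folded relative root datum of the centralizer of the induced involution is a legitimate, if equally sketchy, alternative to the paper's remark that $U_\infty$ is the intersection of the unipotent radical of a minimal parabolic with $\Cent_G(\Psi)$.
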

\begin{proof}
Notice that the absolute flags of $\Delta$ with respect to $\Psi$ are two by two opposite, i.e.\@ they lie at maximal distance.
\begin{compactenum}[(i)]
    \item
	Choose two arbitrary elements $x$ and $y$ of $X$, and let $\Sigma$ be an apartment containing the flags $x$ and $y$.
	Label the roots of $\Sigma$ in such a way that the root groups $U_1,\dots,U_n$ are precisely those fixing the flag $x$.
	Then for each $i \in \{ 1,\dots,n \}$, the conjugate $U_i^\Psi$ is precisely $U_{n+1-i}$, so in particular $U_+$ is normalized by $\Psi$.

	We now claim that $V_+ := U_+ \cap \overline{C}$ acts sharply transitively on $X \setminus \{ x \}$.
	To show that it acts transitively, let $z \in X \setminus \{ x \}$ be arbitrary.
	By \cite[(4.11) and (5.3)]{TitsWeiss}, there exists an element $u \in U_+$ mapping $z$ to $y$.
	It follows that $[u, \Psi] = u^{-1} u^\Psi$ is an element of $U_+$ fixing $y$, but then this element fixes $\Sigma$ pointwise.
	This can only be true if $[u, \Psi] = 1$, and hence $u \in V_+$.
	This shows that $V_+$ acts transitively on $X \setminus \{ x \}$.
	Since no non-trivial element of $U_+$ fixes $y$, we conclude that $V_+$ acts sharply transitively, as claimed.

	Similarly, the group $V_- := U_- \cap \overline{C}$ obtained by interchanging the roles of $x$ and $y$,
	acts sharply transitively on $X \setminus \{ y \}$.
	Moreover, note that $U_+$ is normalized by every element of $G$ fixing the flag $x$, and hence $V_+$ is a normal subgroup of $\operatorname{Stab}_C(x)$.
	This shows that $\langle V_+, V_- \rangle$ is the little projective group of a Moufang set with underlying set $X$.
	Finally, observe that every element of $\overline{C}$ conjugates root groups to root groups,
	and hence $\langle V_+, V_- \rangle$ is a normal subgroup of $\overline{C}$.
    \item
	If either $n$ is even, or $n=3$ and each non-absolute line through a given absolute point of $\Delta$
	contains a second absolute point,
	we will show that every element of $G$ fixing all elements of $X$ fixes all elements of~$\Delta$,
	implying that the kernel $K$ of the action is trivial.

	Assume first that $n=3$, and that $(p, p^\Psi)$ is an absolute flag with the property that every line through $p$
	contains a second absolute point (and hence every point on the line $p^\Psi$ is contained in a second absolute line).
	If $g \in G$ fixes all absolute points and all absolute lines, then it has to fix all points on the line $p^\Psi$
	and all lines through the point $p$, and at least one additional point $q$ not on the line $p^\Psi$.
	This can only be true if $g=1$.

	Assume next that $n$ is even.
	Then the absolute flags of $\Psi$ form an ovoid-spread pair.
	(Indeed, observe that every non-absolute flag $(p, p^\Psi)$ induces an absolute flag in the middle of the
	unique minimal path connecting $p$ and $p^\Psi$.)
	A collineation fixing all points of an ovoid is trivial, which proves the claim.
    \item
	In the case that $\Delta$ arises from a linear algebraic group of relative rank two,
	the group $U_+$ is the unipotent radical of a minimal parabolic $k$-subgroup of $G$ with respect to a fixed maximal $k$-split torus $T$;
	the choice of this torus corresponds to the choice of the apartment $\Sigma$.
	The root group $U_\infty$ of the Moufang set $X$ is then obtained by intersecting $U_+$ with the centralizer in $G$ of $\Psi$.
    \qedhere
\end{compactenum}
\end{proof}
\begin{remark}
	The additional condition when $n=3$ in part (ii) of the previous Proposition is necessary,
	as is illustrated by the polarities of type IV, for which all absolute points lie on a
	single line of $\PO$ (see section~\ref{ss:polIV});
	in this case, the action of $C$ on $X$ is not faithful.
\end{remark}

% In particular, we obtain the action of the unipotent radical $U_H$ of a minimal parabolic $k$-subgroup of $\Cent_G(\Psi)$ by restricting the action of the unipotent radical of a minimal parabolic $k$-subgroup of $G$. We compute the action of this unipotent radical, again following the approach of \cite[Section 5]{MoufF4}, since their methods only use the fact that $\Cent_G(\Psi)$ is a rank one group.
We will now make this result explicit in our situation where $\Delta$ is the projective plane $\PO$, following the approach of \cite[Section 5]{MoufF4}.

We choose an arbitrary element of $X$, and denote it by $\infty$; the corresponding flag of $\PO$ is denoted by $((\infty),[\infty])$.
Next, we choose an apartment $\Sigma$, for instance the apartment through $(0), (0,0)$ and $(\infty)$, through the flag $((\infty),[\infty])$.
The corresponding unipotent radical $U_+$ is the product $U_1 U_2 U_3$ of the three root groups through $(X,((\infty),[\infty]))$.
The first root group $U_1$ is the group of collineations fixing all the points on $[0]$ and the lines through $(\infty)$, the second group $U_2$ fixes all lines through $(\infty)$ and all points on $[\infty]$ while $U_3$ fixes all points on $[\infty]$ and all lines through $(0)$.
We find that these root groups have the following action on $\PO$:
\begin{alignat*}{3}
U_1 &:=\left\{ x_1(M)\mid M\in\oct \right\} &&\text{ where }  x_1(M) \colon &&
\begin{cases}
	(a,b) \mapsto (a,b-Ma), \\
	[m,k] \mapsto [m+M,k],
\end{cases} \\[1ex]
U_2 &:= \left\{ x_2(B)\mid B\in\oct \right\} &&\text{ where } x_2(B) \colon &&
\begin{cases}
	(a,b)\mapsto (a,b+B), \\
	[m,k]\mapsto [m,k+B],
\end{cases} \\[1ex]
U_3 &:= \left\{ x_3(A)\mid A\in\oct \right\} &&\text{ where } x_3(A) \colon &&
\begin{cases}
	(a,b)\mapsto (a+A,b), \\
	[m,k]\mapsto [m,k+mA].
\end{cases}
\end{alignat*}
Therefore, an arbitrary element of $U_+$ is of the form
\[
x(A,B,M) \colon \begin{cases}
(a,b) \mapsto (a+A,b+B-Ma),\\
[m,k] \mapsto [m+M,k+B+mA+MA].
\end{cases} \]
%The minimal parabolic subgroup $P_H$ corresponding to $(\infty)$ in $H$ is a $k$-subgroup of the minimal $k$-parabolic subgroup $P_G$. This implies that the unipotent subgroup $U_H$ of $P_H$ is then the subgroup of $U_G$ consisting of all elements that fix the set $X$.
%We can express that for an arbitrary element $x$ of $X$, the element $\Psi(x)$ should belong again to $X$.
%Also, we can compute the action of $U_+$ on the set $X$ explicitly and thus determine $(U_{\infty},+)$.
%
The root group $U_\infty$ can now be obtained as the subgroup of $U_+$ consisting of those elements that map $X$ to itself.

Finally, we have to find a permutation $\tau$ of $U_\infty^*$.
Therefore, it suffices to find a collineation $\sigma$ of $\PO$ that commutes with $\Psi$ and interchanges the points $(0,0)$ and $(\infty)$.
Let $\tau$ be the restriction of $\sigma$ to $X$; then $U_{\infty}^{\tau}$ will be a root groop and therefore will coincide with the root group $U_{(0,0)}$ of the Moufang set.
This implies that $\mathbb{M}(U_{\infty},\tau)$ is the Moufang set obtained from $\Psi$ as in Proposition~\ref{pr:moufpol} above.
% Is (0,0)*[0,0] in het algemeen een element van X??
%Our goal is to investigate, more or less, the same polarities for arbitrary characteristics this time using modern techniques involving Moufang sets. Therefore, we define three different types of polarities. We get these three types by starting from polarities of the first type. The second and third type then are obtained by composing polarities of the first type with a collineation.

\subsection{Description of the different types of polarities}\label{se:pol}

We describe all different types of polarities of the Moufang plane with at least three absolute points.
In characteristic different from two this was already done by Veldkamp in the late sixties \cite{Unitarygroups}.
He used the description of the octonion plane we defined as $\hatPO$ and showed that in this case there only exist three types of polarities.
% or in other words three polarities that give rise to a different type of Moufang set.
On the other hand, N.~Knarr and M.~Stroppel described all the polarities in characteristic two on $\PO$.
It turns out that also in this case, there are three types of polarities; the first two coincide with two of the polarities found by Veldkamp,
but the third type is a polarity that only exists in the characteristic two case.

\begin{remark}\label{re:std}
All the polarities we present can be seen as the composition of some standard polarity and a collineation on the Moufang plane; see \cite[Theorem 3.4]{knarrstroppel2}.
Furthermore, we may assume (by conjugating the polarity if necessary)
that this collineation is induced by an automorphism of the octonion division algebra $\oct$; see \cite[Theorem 3.6]{knarrstroppel2}.
Such a collineation is easy to write down explicitly, both in $\PO$ and in $\hatPO$:
for $\PO$, it suffices to apply the automorphism on its coordinates;
for $\hatPO$, the collineation is given by applying the automorphism on each of the entries of the matrix for each point and each line.
\end{remark}

% On $\hatPO$, the elements of the Moufang plane are determined by elements of $\oct_3$, matrices with entries in $\oct$.
% The natural way to extend an automorphism on the octonions to $\oct_3$ is to apply the automorphism to each of the entries of the matrix. 
% Similarly, if we look at the representation $\PO$ of the octonion plane, an automorphism $\sigma$ on $\oct$ induces a collineation on $\oct$ which maps the elements $(a,b)$ to $(\sigma(a),\sigma(b))$, $[a,b]$ to $[\sigma(a), \sigma(b)]$ and so on.

By Remark~\ref{re:std}, it suffices to go over the different types of automorphisms on $\oct$ in order to describe all possible polarities with at least three absolute points.

\subsubsection{Polarities of type I -- the standard polarity}

% In his article Veldkamp proves that in characteristic different from two, there are only three polarities (having absolute points) on Moufang planes which can occur.
 %He defines these polarities using a description of an octonion plane with elements of Albert algebras that can be adjusted into ours. We translate his results into our representation of the Moufang plane and describe the three types of Moufang sets arising form each of the polarities. Veldkamp himself already partially proved what the outcome should be making use of Lie algebras en Lie groups, we give a more natural approach and identify the groups using the theory of Moufang sets and get rid of the obstruction that the characteristic of field should differ from two and three.\\

We define a natural polarity $\pi$ on $\hatPO$:
\[
\pi \colon \hatPO\to \hatPO \colon (x)\mapsto [x] .
\]
It is easy to see that this indeed forms a polarity; we refer to \cite[Theorem~4.5]{MoufF4} for a proof that this polarity has enough absolute points. For obvious reasons, we call this polarity the {\em standard polarity}.

We use the isomorphism $\phi$ between $\hatPO$ and $\PO$ to transform the above polarity into the following polarity of $\PO$:
\begin{align*}
(a,b)&\leftrightarrow [-\overline{{a}{b}^{-1}},-\overline{{b}^{-1}}]\\
(a,0)&\leftrightarrow [\overline{{a}^{-1}}]\\
(0,0)&\leftrightarrow [\infty]\\
(c)&\leftrightarrow [\overline{{c}^{-1}}]\\
(0) &\leftrightarrow [0]\\
(\infty) &\leftrightarrow [0,0]
\end{align*}
An easy transformation (see \cite[Section 4.5]{MoufF4} for more details) reduces these polarities to the following more elegant form:
\begin{align*}
(a,b)&\leftrightarrow [\overline{{a}},-\overline{{b}}]\\
(c)&\leftrightarrow [\overline{{c}}]\\
(\infty) &\leftrightarrow [\infty].
\end{align*}

Taking into account the remarks we just made about the form of a general polarity, we find that each polarity of $\PO$ is conjugate to a polarity
of the following form, for some $\eta \in \Aut(\oct)$:
\begin{equation}\label{polarity}
\begin{aligned}
(a,b)&\leftrightarrow [ \eta(\overline{a}), -\eta(\overline{b}) ] \\[.4ex]
(c)&\leftrightarrow [ \eta(\overline{c}) ] \\
(\infty) &\leftrightarrow [\infty].
\end{aligned}
\end{equation}

\subsubsection{Polarities of type II}\label{ss:II}

This type of polarity only exists if the center $E$ of the octonion division algebra $\oct$ is a separable quadratic extension of a subfield $k$
and $\oct$ is obtained by extending scalars from an octonion division algebra over $k$.
So let $\oct_k$ be an octonion division algebra over $k$, and assume that $E/k$ is a separable quadratic extension such that $\oct = \oct_k \otimes_k E$
remains division.
Let $\gamma$ be the non-trivial element of $\operatorname{Gal}(E/k)$; then $\gamma$ is an involution on $E$, which
extends to a non-linear automorphism $\eta$ of $\oct$ by applying the involution to each coefficient with respect to a basis of $\oct_k$.
This automorphism gives rise to a polarity described in equation~\eqref{polarity},
and we will refer to this class of polarities as the {\em polarities of type II}.

\subsubsection{Polarities of type III}\label{ss:III}

The third type of polarity only exists if the characteristic of the center $k$ of $\oct$ is different from two.
(More precisely, when $\ch(k) = 2$, it coincides with the standard polarity.) 
Let $D$ be an arbitrary quaternion subalgebra of $\oct$.
Then $\oct$ decomposes as the direct sum of $D$ and $D^{\perp}$,
and the map
%This implies an arbitrary $x\in \oct$ is the unique sum of an element $d\in D$ and an element $d'\in D^{\perp}$. As a consequence, we can define without ambiguity an automorphism $\eta$ on $\oct$ as follows:
\[ \eta \colon \oct\to \oct \colon  d+d'\mapsto d-d', \]
for all $d\in D$ and $d'\in D^{\perp}$,
is an automorphism of $\oct$.

Again, such an automorphism induces a polarity by equation~\eqref{polarity}.
We will refer to this class of polarities as the {\em polarities of type III}.

\subsubsection{Polarities of type IV}\label{ss:IV}

In contrast with the polarities of type III, we will now describe a type of polarity (or automorphism) that only exists when the characteristic of the field is two.
The reason for this is the following: in characteristic two, all octonion division algebras possess a totally singular subalgebra $D$ of dimension $4$.
If such an algebra exists, one can show it is in fact a subfield of the octonion division algebra; see \cite[Theorem 4.11]{Faulkner}.

We will show explicitly how such an algebra is naturally contained in an octonion division algebra in characteristic two. For this, we rely on the fact that each octonion algebra in characteristic two has a so-called symplectic basis. This is a basis of the form $e$, $a$, $b$, $ab$, $c$, $ac$, $bc$, $(ab)c$ with
$N(a)N(b)N(c)\neq 0$, such that
\[
\langle e,a\rangle =1,\quad \langle b,ab\rangle=N(b),\quad \langle c,ac\rangle =N(c), \quad \langle bc, (ab)c=N(b)N(c)\rangle,
\]
and all other inner products between distinct basis vectors are zero;
see \cite[Section 1.6]{octonions}.
Now let $D$ be the subspace of $\oct$ spanned by the vectors $e$, $b$, $c$ and $bc$;
then $D$ is a totally singular subalgebra, which is a $4$-dimensional subfield of the octonion algebra.

It is easy to see that for each element $z \in\oct\setminus D$, there is a decomposition $\oct=D\oplus Dz$.
Furthermore, we can choose this element $z$ in such a way that $\tr(z)=1$, or equivalently, such that $\overline{z}=z+e$.
The map
\[ \eta \colon\ \oct\to\oct \colon d+d'z \mapsto d+d'\overline{z} \]
for all $d,d'\in D$, is an automorphism of order two on $\oct$.
The {\em polarities of type IV} are now those induced by such an automorphism.

\section{Description of the Moufang sets}

In the previous section we described all different types of polarities with at least three absolute points that can occur on a Moufang plane. Each of these polarities now induces a certain type of Moufang set. In this subsection we give a detailed description of all these Moufang sets. In most of the cases we can identify the Moufang sets we find with some known algebraic Moufang sets. Polarities of type II on the other hand result in Moufang sets that have not yet been described in literature.
We will show that this type of Moufang set is also of algebraic nature; more specifically, these Moufang sets arise from algebraic groups of type $^2\!E^{29}_{6,1}$.

\subsection{General method}\label{se:polgeneral}

For each of the four types of polarities described in section~\ref{se:pol},
we compute the Moufang set corresponding to such a polarity $\Psi$ with the methods discussed in section \ref{Reconstruction}.
Let $\eta \in \Aut(\oct)$ be the automorphism of the octonion algebra $\oct$ corresponding to the type of $\Psi$.
Then the set of absolute points of $\Psi$ is
\[
X=\{ (a,b)\in \oct\times\oct\mid \eta(\overline{a}) \cdot a+\eta(\overline{b})+b=0\}\cup\{\infty\}.
\]
Notice that the flag $(0,0)*[0,0]$ is fixed under the action of $\Psi$, so for an arbitrary element $x(A,B,M)$ of $U_{\infty}$, the flag
\[ x(A,B,M)(0,0)*x(A,B,M)[0,0]=(A,B)*[M,B+MA] \]
has to be fixed under $\Psi$. Since
\[ \Psi((A,B)*[M,B+MA])=[\eta(\overline{A}),\eta(\overline{B})]*\bigl(\eta(\overline{M}),\eta(\overline{{B}+{M}{A}}) \bigr), \]
this implies $A=\eta(\overline{M})$ and $\eta(\overline{B})=B+MA$.
Because $A$ and $B$ alone determine the element $x(A,B,M)$ completely, we will denote this element by $x(A,B)$.

The composition of two arbitrary elements $x(A,B)$ and $x(C,D)$ has the same action on $X$ as the element $x\bigl(A+C,B+D-\eta(\overline{C})A\bigr)$.
This implies that the group $(U,+)$ of the Moufang set we are describing is
\begin{equation}\label{eq:U}
U=\bigl\{ (a,b)\in \oct\times\oct\mid \eta(\overline{a})\cdot a+\eta(\overline{b})+b=0\bigr\},
\end{equation}
with the group operation $+$ on $U$ given by
\[
(a,b)+(c,d)=\bigl(a+c,b+d-\eta(\overline{c})\cdot a \bigr)
\]
for all $(a,b),(c,d)\in U$.

We determine an appropriate permutation ${\tau}$ on $U^*$. Therefore we need a collineation on $\PO$ commuting with $\Psi$.
Inspired by \cite{MoufF4}, we find that the following collineation $\sigma$ has the desired properties:
\begin{equation}\label{eq:colsigma}
{\sigma} \colon
\begin{cases}
(a,b)\mapsto (-{a}{b}^{-1},{b}^{-1}) \\
[m,k]\mapsto [{k}^{-1}{m},{k}^{-1}] .
\end{cases}
\end{equation}
The permutation ${\tau}$ can be defined as the restriction of ${\sigma}$ to $X$, and is thus defined by
\begin{equation}\label{eq:tau}
	{\tau}(a,b)=(-{a}b^{-1},b^{-1})
\end{equation}
for all $(a,b)\in U^*$.

\subsection{Polarities of type I -- Moufang sets of type $F_4$}

The polarities of type I are exactly those described in \cite{MoufF4}.
There, it is shown that the Moufang sets corresponding to the standard polarity are precisely the Moufang sets of type $F_4$ or thus
Moufang sets arising from an algebraic group of type $F^{21}_{4,1}$.

\subsection{Polarities of type II -- Moufang sets of type $^2\!E_6$}\label{Twisted}

Assume now that $\Psi$ is a polarity of type II.
We will prove that in this case, the Moufang set arises indeed from a linear algebraic group of type $^2\!E^{29}_{6,1}$ over $k$.
Therefore we rely on a paper by S.~Garibaldi \cite{Skip} dealing with the algebraic structure of the linear algebraic group in terms of elements of an Albert algebra.
In [{\em loc.\@ cit.\@}, section~2], he gives a description of the linear algebraic groups of type $^2\!E^{29}_{6,1}$ over
fields of characteristic not $2$ or $3$;
the argument given in the proof of \cite[Theorem 4.1]{MoufF4} (which is based on results by J.~Faulkner \cite{Faulkner})
shows that this description remains valid over fields of arbitrary characteristic.

Recall from section~\ref{ss:II} that $\oct = \oct_k \otimes_k E$, where $\oct_k$ is an octonion division algebra over $k$.
Let $J:=J(\HOk)$ be a quadratic Jordan algebra over $k$ with corresponding norm and trace maps $\N$ and $\tr$,
and define the group $M_1(J)$ as the group of isometries on $J$. 
Every element $g$ of $M_1(J)$ has a natural action on $\hatPOk$; more precisely, $g$ induces a collineation $\rho(g)$ on the Moufang plane by defining
\[
\rho(g) \colon 
\begin{cases}
\mathcal{P}\to\mathcal{P} \colon (x)\mapsto (g(x)) \\
\mathcal{L}\to\mathcal{L} \colon [y]\mapsto[g^{\dag}(y)],
\end{cases}
\]
where $g^{\dag}$ is the unique element of $M_1(J)$ such that $\tr(g(x),g^{\dag}(y))=\tr(x,y)$ for all $x,y\in J$.
Moreover, the map $\rho$ is a $k$-isomorphism between $M_1(J)$ and the little projective group $G$ generated by all elations of the Moufang plane.

Furthermore, we define a map $\tau$ as
\[
\tau \colon  J\to J \colon  (\epsilon_1,\epsilon_2,\epsilon_3;\; c_1,c_2,c_3)\mapsto (\epsilon_1,\epsilon_3,\epsilon_2;\; \overline{c_1}, \overline{c_3}, \overline{c_2})
\]
for all $a = (\epsilon_1,\epsilon_2,\epsilon_3;\; c_1,c_2,c_3) \in J$. 
 
We now construct the twisted algebraic group $^{2}\!{M_1}(J)$ corresponding to $M_1(J)$.
This is an algebraic group such that $^{2}\!{M_1}(J)\otimes_k E \cong M_1(J)\otimes_k E \cong M_1(J \otimes E)$,
this last group being the group of all similarities of $J\otimes_k E$ that preserve the extended norm $N\otimes_k E$.

The non-trivial element $\eta$ of the Galois group $\operatorname{Gal}(E/k)$ acts differently on
the groups $M_1(J)$ and $^2\!M_1(J)$.
Indeed, the non-twisted action on an element of $M_1(J)(E):=M_1(J)\otimes_k E$ which we denote by $\iota$, is as follows:
\[
\iota \colon M_1(J)(E)\to M_1(J)(E) \colon  g\otimes \ell\mapsto g\otimes \eta(\ell)
\]
for all $g\in M_1(J), \ell\in E$.
On the other hand, the twisted Galois action on $M_1(J)(E)$ is given by
\[
\iota^* \colon M_1(J)(E)\to M_1(J)(E) \colon  g\otimes \ell \mapsto \tau\circ [\iota({g\otimes \ell})]^{\dag}\circ \tau^{-1},
\]
where $\tau$ and $\dag$ are the natural extensions to $M_1(J)(E)$ of the equally named maps on $M_1(J)$;
see \cite[Section 2]{Skip}.
We will also write $\iota * g$ for $\iota^*(g)$.

We conclude that the corresponding groups over $k$ are given by
% So, we conclude that over $k$ the twisted and non-twisted variant are not the same as abstract groups, more specifically we obtain that over $k$ this group consists of the elements
\begin{align*}
	M_1(J) &= \{ g\in M_1(J)(E)\mid \iota(g)=g \} \quad \text{ and} \\
% while on the other the elements of the twisted group over $k$ are
	^2\!M_1(J) &= \{ g\in M_1(J)(E)\mid \iota * g =g \}.
\end{align*}
For a more general and detailed description on the process of twisting of linear algebraic groups, we refer for instance to \cite[Kapitel I, \S 3]{Selbach}
or \cite[Section 5.3]{Serre}.

Next, we define the following map $\psi$ on $\hatPO$, the Moufang plane over~$\oct := \oct_k \otimes_k E$:
\[
\psi \colon \mathcal{P}\to \mathcal{L} \colon  (x)\mapsto [(\tau \circ \tilde\eta)(x)],
\]
with $\tilde\eta := {\id_{J}} \otimes \eta$.
It is easy to check this map induces a polarity of the Moufang plane $\hatPO$.

Observe that the isomorphism $\rho$ from $M_1(J)$ to $G$ extends naturally to an isomorphism from $M_1(J)(E)$ to $G(E)$. 
The following theorem shows that under this extended isomorphism, the elements of the twisted group $^2\!M_1(J)$ are mapped to elements of $G(E)$ commuting with the polarity $\psi$.
%The following theorem shows there is a nice correspondence between the elements $g$ of $^2\!M_1(J)$ and the elements $\rho(g)$ with $g\in M_1(J)(E)$ that commute with the polarity $\psi$.
% More precisely, we have the following:
\begin{theorem}\label{th:2E6}
The map $\rho$ induces a $k$-isomorphism between the group $^2\!M_1(J)$ of elements $g\in M_1(J)(E)$ such that $\iota*g=g$,
and the group $\Cent_{G(E)}(\psi)$ of elements of $G(E)$ that commute with the polarity~$\psi$.
\end{theorem}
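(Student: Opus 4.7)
The plan is to extend $\rho$ by base change to an $E$-isomorphism $\rho_E \colon M_1(J)(E) \to G(E)$ and then to verify that for every $g \in M_1(J)(E)$, the two conditions ``$\iota * g = g$'' and ``$\rho_E(g)$ commutes with $\psi$'' unfold to the same operator identity on $J \otimes_k E$, namely $g^\dag = (\tau\tilde\eta)\, g\, (\tau\tilde\eta)$. The bijection between the two subgroups then follows because $\rho_E$ is a bijection; the claim that it is a $k$-isomorphism of algebraic groups follows from Galois descent, since the computation will show that $\rho_E$ intertwines the $\iota$-action on $M_1(J)(E)$ with conjugation by $\psi$ on $G(E)$, and the twisted fixed points on either side are precisely the $k$-points of the corresponding $k$-forms.

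First I would analyze the centralizer condition. Since $\rho_E(g)$ acts on $\hatPO$ as $(x) \mapsto (g(x))$ on points and $[y] \mapsto [g^\dag(y)]$ on lines, a direct computation gives
\[
\psi\, \rho_E(g) \colon (x) \mapsto [\tau\tilde\eta\, g(x)], \qquad \rho_E(g)\, \psi \colon (x) \mapsto [g^\dag\, \tau\tilde\eta(x)].
\]
Hence $\rho_E(g)$ commutes with $\psi$ on points if and only if $\tau\tilde\eta\, g(x)$ and $g^\dag\, \tau\tilde\eta(x)$ are $E$-proportional for every $x \in J \otimes_k E$ with $x^\sharp = 0$ and $x \neq 0$. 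After eliminating the projective scalar (see the final paragraph), this upgrades to the operator identity $\tau\tilde\eta\, g = g^\dag\, \tau\tilde\eta$, which, because $\tau$ and $\tilde\eta$ are commuting involutions (so $(\tau\tilde\eta)^2 = 1$), rewrites as $g^\dag = (\tau\tilde\eta)\, g\, (\tau\tilde\eta)$. The analogous identity for the action on lines is automatic from the point identity by the same involutivity.

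Next I would unfold the twisted Galois condition. Regarding $g$ as an $E$-linear endomorphism of $J \otimes_k E$, the untwisted Galois action is $\iota(g) = \tilde\eta \circ g \circ \tilde\eta^{-1}$. Since $T$ is defined over $k$ and $\tilde\eta$ is $\eta$-semilinear with $T(\tilde\eta x, \tilde\eta y) = \eta(T(x,y))$, a short calculation gives $(\tilde\eta\, h\, \tilde\eta^{-1})^\dag = \tilde\eta\, h^\dag\, \tilde\eta^{-1}$ for every $E$-linear $h$. Substituting into the definition of $\iota*g$ produces
\[
\iota * g \;=\; \tau\, \iota(g)^\dag\, \tau^{-1} \;=\; \tau\tilde\eta\, g^\dag\, \tilde\eta^{-1} \tau^{-1} \;=\; (\tau\tilde\eta)\, g^\dag\, (\tau\tilde\eta),
\]
where we used $\tau^{-1} = \tau$, $\tilde\eta^{-1} = \tilde\eta$, and $\tau\tilde\eta = \tilde\eta\tau$. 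Setting $\iota * g = g$ and multiplying by $(\tau\tilde\eta)$ on both sides (using $(\tau\tilde\eta)^2 = 1$) yields $g^\dag = (\tau\tilde\eta)\, g\, (\tau\tilde\eta)$, which is precisely the identity obtained from the centralizer analysis.

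The main obstacle, in my view, is the careful treatment of the scalar ambiguity in the centralizer analysis: the projective equality $[\tau\tilde\eta\, g(x)] = [g^\dag\, \tau\tilde\eta(x)]$ on the variety $\{x^\sharp = 0,\, x \neq 0\}$ must be upgraded to a linear equality of operators on all of $J \otimes_k E$. Because $g$ and $g^\dag$ both preserve the cubic norm $N$, any proportionality scalar $c_x$ must satisfy $c_x^3 = 1$, and then a spanning argument for the rank-one cone (whose $E$-linear span is all of $J \otimes_k E$) forces $c_x$ to be a single constant, which the norm-preservation condition pins down to $1$. Once this is in place, the restriction of $\rho_E$ gives a bijection between $^2\!M_1(J)$ and $\Cent_{G(E)}(\psi)$, and because the computations above identify the twisting data on the two sides, this restriction descends to a $k$-isomorphism of algebraic groups, completing the proof.
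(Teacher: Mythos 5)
Your proposal is correct and follows essentially the same route as the paper: both arguments reduce the centralizer condition and the twisted Galois condition to the single operator identity $\tau\tilde\eta\, g = g^{\dag}\,\tau\tilde\eta$, your observation that $\iota(g)=\tilde\eta\, g\,\tilde\eta^{-1}$ being exactly the relation $\iota(g)\tilde\eta=\tilde\eta g$ that the paper verifies by writing $g=g_1\otimes 1+g_2\otimes\delta$, and your conjugation formula for $(\,\cdot\,)^{\dag}$ playing the role of the paper's non-degenerate-trace computation of $(\iota*g)^{\dag}=\tau\,\iota(g)\,\tau$. The only place you go beyond the paper is the discussion of the projective scalar ambiguity, which the paper passes over silently by identifying equality of collineations with the operator identity outright.
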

\begin{proof}
Since we have already a $k$-isomorphism between $M_1(J)(E)$ and $G(E)$, it suffices to prove that $\psi\circ\rho(g)=\rho(g)\circ\psi$ if and only if $\iota*g=g$.

Write $E = k(\delta)$; an arbitrary element $g \in M_1(J)(E)$ can then be written in the form
\[ g = g_1 \otimes 1 + g_2 \otimes \delta , \]
with $g_1, g_2 \in M_1(J)$.

Let $a=x_1\otimes m_1+\dots + x_s\otimes m_s\in J_E$ with $x_i \in J$, $m_i \in E$ be an element with $a^{\sharp} = 0$;
then
\[
\psi\bigl(\rho(g)\{(a)\}\bigr) = \psi\{(g(a))\} = [\tau\tilde\eta{g(a)}]
\]
while
\[
\rho(g)\bigl(\psi\{(a)\}\bigr) = \rho(g)([\tau\tilde\eta(a)]) = [g^{\dag}\tau\tilde\eta(a)].
\]
We conclude that $\psi \circ \rho(g) = \rho(g) \circ \psi$ if and only if $\tau\tilde\eta g=g^{\dag}\tau\tilde\eta$.

Next, we verify when $\iota*g=g$ or equivalently if $(\iota*g)^{\dag}=g^{\dag}$.
First, we show that $(\iota*g)^{\dag}=\tau[\iota(g)]\tau$.
We compute
\begin{align*}
\tr(\tau x, \tau y)
&= \tr(x,y)\\
&= \tr\bigl((\iota*g)x,(\iota*g)^{\dag}y\bigr)\\
&= \tr\bigl(\tau [\iota(g)]^{\dag}\tau x, (\iota*g)^{\dag}y\bigr)\\
&= \tr\bigl(\iota(g)^{\dag}\tau x,\tau(\iota*g)^{\dag}y\bigr)\\
&= \tr\bigl(\tau x, [\iota(g)]^{-1}\tau (\iota*g)^{\dag} y\bigr),
\end{align*}
for all $x,y\in J_E$. Since the trace $\tr$ is non-degenerate, we can conclude that $\tau = [\iota(g)]^{-1}\tau (\iota*g)^{\dag}$. 

The problem is reduced to proving that $\tau[\iota(g)]\tau =g^{\dag}$ if and only if $\tau\tilde\eta g = g^{\dag}\tau\tilde\eta$,
or equivalently, that $\iota(g) \tilde\eta = \tilde\eta g$.
Notice that
\[ \iota(g) = g_1 \otimes 1 + g_2 \otimes \eta(\alpha), \]
and hence
\begin{align*}
\iota(g) \tilde\eta (x_i \otimes m_i)
&= \iota(g) \bigl( x_i \otimes \eta(m_i) \bigr) \\
&= g_1(x_i) \otimes \eta(m_i) + g_2(x_i) \otimes \eta(\alpha) \eta(m_i) \\
&= \tilde\eta \bigl( g_1(x_i) \otimes m_i + g_2(x_i) \otimes (\alpha m_i) \bigr) \\
&= \tilde\eta \bigl( g(x_i \otimes m_i) \bigr);
\end{align*}
it follows that $\iota(g) \tilde\eta (a) = \tilde\eta g(a)$ for all $a \in J_E$.
\end{proof}

In order to able to invoke Proposition~\ref{pr:moufpol},
we have to check the condition in part (ii) of that proposition.
\begin{lemma}
	Let $\PO$ and $\Psi$ be as above.
	Every non-absolute line $[a]$ through the point $(\infty)$ contains an absolute point of the form $(a,b)$.
\end{lemma}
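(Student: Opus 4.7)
The plan is to reduce the claim to an additive surjectivity question over $\oct$. By the incidence relations of $\PO$, the points on a line $[a]$ are precisely those of the form $(a,b)$ with $b \in \oct$, and the description of the absolute set $X$ in \eqref{eq:U} says that $(a,b)$ is absolute iff $\eta(\overline{b}) + b = -\eta(\overline{a}) \cdot a$. Writing $c := -\eta(\overline{a}) \cdot a$, the task becomes: given $a \in \oct$, find $b \in \oct$ with $\phi(b) = c$, where $\phi \colon \oct \to \oct$ is the additive map $b \mapsto \eta(\overline{b}) + b$.

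First I would verify that $c$ lies in the subspace $V := \{v \in \oct : \eta(\overline{v}) = v\}$, which manifestly contains the image of $\phi$. Using $\eta^2 = \id$, the anti-automorphism property $\overline{xy} = \overline{y}\cdot\overline{x}$, and the fact that every algebra automorphism of $\oct$ commutes with the standard involution, a short computation yields $\eta(\overline{c}) = c$. Next I would produce an explicit preimage. Since $E/k$ is a separable quadratic extension (the defining hypothesis of type II polarities, cf.~section~\ref{ss:II}), the trace $\operatorname{tr}_{E/k} \colon E \to k$ is surjective, so we may choose $\lambda \in E$ with $\lambda + \eta(\lambda) = 1$. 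Setting $b := \lambda c$ and using that $\lambda$ lies in the center $E$ of $\oct$ and is fixed by the standard involution, one checks
\[ \phi(\lambda c) \;=\; \eta(\lambda)\,\eta(\overline{c}) + \lambda c \;=\; (\eta(\lambda) + \lambda)\,c \;=\; c. \]

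The only subtlety is characteristic dependence: in char $\neq 2$ the naive choice $b = c/2$ already does the job, but this collapses in char $2$. The separability of $E/k$ is exactly what supplies a uniform $\lambda$ with $\operatorname{tr}_{E/k}(\lambda) = 1$ in all characteristics, so I expect no genuine obstacle; the computation is short and self-contained once $c$ has been placed in $V$.
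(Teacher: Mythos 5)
Your proposal is correct, and its first step is exactly the reduction the paper makes: one must solve $\eta(\overline{b})+b=-\eta(\overline{a})\cdot a$ for $b$, and the right-hand side $c:=-\eta(\overline{a})\cdot a$ is fixed by $x\mapsto\eta(\overline{x})$ (your verification of this, using $\eta^2=\id$, $\overline{xy}=\overline{y}\cdot\overline{x}$ and the fact that $\eta$ commutes with the standard conjugation, is sound even though $\eta$ is only $\gamma$-semilinear over the center $E$ rather than an $E$-algebra automorphism). Where you diverge is the final step: the paper simply quotes Proposition~3.2 of Knarr--Stroppel, which asserts that the map $b\mapsto\eta(\overline{b})+b$ hits every element fixed by $x\mapsto\eta(\overline{x})$, whereas you exhibit an explicit preimage $b=\lambda c$ with $\lambda\in E$ central, fixed by the standard involution, and of trace $\operatorname{tr}_{E/k}(\lambda)=\lambda+\eta(\lambda)=1$; separability of $E/k$ guarantees such a $\lambda$ in every characteristic, and the computation $\eta(\overline{\lambda c})+\lambda c=(\eta(\lambda)+\lambda)c=c$ closes the argument. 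Your version buys self-containedness and makes transparent why the statement is uniform in the characteristic (the na\"{\i}ve $b=c/2$ being the special case $\lambda=\tfrac12$), at the cost of being tailored to the type~II situation where a central element of trace one is available; the cited Knarr--Stroppel result is the tool one would reach for if the same surjectivity were needed for the other automorphism types as well.
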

\begin{proof}
	We have to check that for every $a \in \oct$, the equation
	\[ \eta(\overline{a}) \cdot a+\eta(\overline{b})+b=0 \]
	has a solution $b \in \oct$.
	Since $\eta(\overline{a}) \cdot a$ is fixed by the map $x \mapsto \eta(\overline{x})$,
	this follows immediately from \cite[Proposition 3.2]{knarrstroppel}.
\end{proof}

It now follows from Proposition~\ref{pr:moufpol} that the Moufang set corresponding to $\Psi$ has a little projective group
which is a normal subgroup of the centralizer $\Cent_G(\Psi)$ of the polarity.
If we can show that this polarity $\Psi$ gives rise to the same Moufang set (up to isomorphism) as the Moufang set we obtained from the polarity $\psi$, then it will
indeed follow from Theorem~\ref{th:2E6}
that the Moufang set corresponding to $\Psi$ arises from a twisted algebraic group of type $^2\!E^{29}_{6,1}$.

We calculate what this polarity $\psi$ looks like on $\PO$, and we find
\begin{align*}
(a,b)& \mapsto [-{\eta(\bar{b})^{-1}},-\eta(\overline{a b^{-1}})]\\
(a,0) & \mapsto [-\eta(\overline{a})]\\
(0,0) & \mapsto [0]\\
(c) & \mapsto [0,-\eta(\bar{c})^{-1}]\\
(0) & \mapsto [\infty]\\
(\infty) & \mapsto [0,0]
\end{align*}
for all $a\in\oct$ and all $b,c\in\oct\setminus \{0\}$. 
All that is left now is to construct an incidence preserving coordinate transformation mapping $\Psi$ to $\psi$.
After some calculations, we find that the following transformation $T \colon \PO \to \PO$ does the job:
\[
\begin{aligned}
	(a,b)&\mapsto (b,-a)\\
	 (c)&\mapsto (-c^{-1})\\
	(0)&\mapsto (\infty)\\
	(\infty)&\mapsto (0)
\end{aligned}
\hspace*{8ex}
\begin{aligned}{}
	[m,k]&\mapsto [-m^{-1},-m^{-1} k]\\
	[0,b]&\mapsto [b]\\
	[a]&\mapsto [0,-a]\\
	[\infty]&\mapsto [\infty]
\end{aligned}
\]
for all $a,b,k\in\oct$ and all $c,m\in\oct\setminus \{0\}$.
The existence of such a transformation proves that both Moufang sets are indeed isomorphic.
We obtain the following result:
\begin{theorem}
The Moufang set $\mathbb{M}(U,\tau)$ obtained from a polarity of type~II, given by equations~\eqref{eq:U} and~\eqref{eq:tau}, is the Moufang building associated to a twisted linear algebraic group of type $^2\!E^{29}_{6,1}$.
Conversely, every Moufang set corresponding to such an algebraic group of type $^2\!E^{29}_{6,1}$ can be obtained from a polarity of type II
and is therefore of the form $\mathbb{M}(U,\tau)$, with $U$ and $\tau$ as in equations~\eqref{eq:U} and~\eqref{eq:tau},
and with $\eta$ as in section~\textup{\ref{ss:II}}.
\end{theorem}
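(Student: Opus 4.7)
The plan is to combine Proposition~\ref{pr:moufpol}, Theorem~\ref{th:2E6} and the explicit coordinate transformation $T$ displayed just before the theorem statement, so that the bulk of the work has in fact already been done and only needs to be assembled.

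For the forward direction, I would first note that the lemma immediately preceding the theorem verifies the hypothesis of Proposition~\ref{pr:moufpol}(ii), so $\mathbb{M}(U,\tau)$ really is a Moufang set whose little projective group is a normal subgroup of $\Cent_G(\Psi)$ itself. Next I would check that $T$ is an automorphism of $\PO$ that intertwines $\Psi$ with the polarity $\psi$ introduced just above Theorem~\ref{th:2E6}. This is a routine case-check against the six orbit types of points and lines, using the formulas~\eqref{polarity} for $\Psi$ and the explicit form of $\psi$ on $\PO$ computed just above the theorem. Once this intertwining is in place, conjugation by $T$ carries $\Cent_G(\Psi)$ equivariantly onto $\Cent_G(\psi)$ and therefore identifies the two Moufang sets up to isomorphism. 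Since Theorem~\ref{th:2E6} already identifies $\Cent_{G(E)}(\psi)$ with $^2\!M_1(J)$ as $k$-groups, and since Garibaldi's construction in \cite{Skip} shows that $^2\!M_1(J)$ is a linear algebraic group of Tits index $^2\!E^{29}_{6,1}$, the first half of the statement follows.

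For the converse, I would invoke the classification of $k$-forms of $E_6$ of relative rank one with Tits index $^2\!E^{29}_{6,1}$: every such group is $k$-isomorphic to $^2\!M_1(J(\HOk))$ for some octonion division algebra $\oct_k$ over $k$ together with a separable quadratic extension $E/k$ such that $\oct_k \otimes_k E$ remains a division algebra. These data are precisely the input for the construction of a type II polarity in section~\ref{ss:II}. Running the forward direction on the resulting polarity then recovers the given algebraic group as the little projective group of its associated Moufang set, exhibiting that Moufang set in the asserted form $\mathbb{M}(U,\tau)$ with the prescribed $U$, $\tau$ and $\eta$ from equations~\eqref{eq:U} and~\eqref{eq:tau}.

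The main obstacle is the verification that $T$ genuinely conjugates $\Psi$ onto $\psi$: the formula for $\Psi$ is the clean rule $(a,b) \leftrightarrow [\eta(\overline{a}),-\eta(\overline{b})]$ from~\eqref{polarity}, whereas the translation of $\psi$ back to $\PO$ involves inverses such as $\eta(\overline{b})^{-1}$ and $\eta(\overline{a b^{-1}})$, so matching them requires a careful orbit-by-orbit comparison (including the points and lines at infinity) and uses the alternative-ring identities in $\oct$. A secondary subtlety is in the converse: one must ensure that every group with Tits index $^2\!E^{29}_{6,1}$ really does arise as $^2\!M_1(J(\HOk))$ for some pair $(\oct_k,E)$, so that no such group is omitted and the correspondence between type II polarities and Moufang sets of this type is genuinely surjective on isomorphism classes.
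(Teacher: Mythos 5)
Your proposal is correct and follows essentially the same route as the paper: the theorem is proved there precisely by assembling the preceding lemma with Proposition~\ref{pr:moufpol}, verifying that the explicit transformation $T$ conjugates $\Psi$ to $\psi$, and then invoking Theorem~\ref{th:2E6} together with Garibaldi's description of the groups of type $^2\!E^{29}_{6,1}$ as $^2\!M_1(J(\HOk))$ for the converse. The two ``obstacles'' you flag are exactly the two points the paper itself addresses (the orbit-by-orbit check of $T$ and the surjectivity of Garibaldi's construction), so nothing is missing.
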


\subsection{Polarities of type III -- Moufang sets of hermitian type}

We reconstruct the Moufang structure on $X$ arising from of a polarity $\Psi$ of type III, and we will prove that the Moufang set we obtain is indeed a hermitian Moufang set (of type $C_4$),
% We do this by constructing an explicit isomorphism between the Moufang sets obtained from the polarity of type III and hermitian Moufang sets.
by constructing an explicit isomorphism.

First, we determine the structure of the Moufang set obtained by the polarity of the octonion plane.
We begin in the same fashion as in section~\ref{se:polgeneral};
in particular, the group $U$ is given by equation~\eqref{eq:U}.
However, in order to simplify our proof of the isomorphism with the hermitian Moufang set,
we will choose a slightly different $\tau$, as follows.
Instead of the collineation $\sigma$ given by equation~\eqref{eq:colsigma}, we choose the following collineation $\sigma$
(which still commutes with the polarity $\Psi$):
\[
{\sigma} \colon
\begin{cases}
(a,b)\mapsto \bigl(-\eta(ab^{-1}), \eta(b)^{-1}\bigr)\\
[m,k]\mapsto \bigl[\eta(k^{-1}m), \eta(k)^{-1}\bigr].
\end{cases}
\]
(Recall that $\eta \in \Aut(\oct)$ is as in section~\ref{ss:III}.)
The corresponding permutation $\tau$ on $U^{*}$ is thus defined by
\begin{equation}\label{eq:tauIII}
	{\tau}(a,b)=\bigl(-\eta(ab^{-1}),\eta(b)^{-1}\bigr)
\end{equation}
for all $(a,b)\in U^*$.

\begin{theorem}
	Let $k$ be a field with $\ch(k) \neq 2$, and let $\oct$ be an octonion division algebra over $k$.
	Consider a decomposition $\oct = D\oplus cD$ with $D$ a quaternion subalgebra of $\oct$ and some $c\in {D^{\perp}}$ with $\beta = N(c)\neq 0$.
	Let
	\[ \eta \colon \oct \to \oct \colon a_1 + c a_2 \mapsto a_1 - c a_2 , \]
	and let
	\[ h \colon \oct \times \oct \to D \colon (a_1 + c a_2, b_1 + c b_2) \mapsto \overline{a_1} b_1 + \beta \overline{a_2} b_2 \]
	for all $a_1,a_2,b_1,b_2 \in D$.
	Then:
	\begin{compactenum}[\rm (i)]
	    \item
		$h$ is a hermitian form on $\oct$ (considered as a $2$-dimensional right vector space over $D$);
	    \item
		the Moufang set corresponding to $h$ (as defined in section~\textup{\ref{se:herm}})
		is isomorphic to the Moufang set $\mouf(U, \tau)$ arising from the polarity $\Psi$ corresponding to $\eta$,
		with $U$ given by equation~\eqref{eq:U} and $\tau$ given by equation~\eqref{eq:tauIII};
	    \item
		$\mouf(U, \tau)$ arises from a linear algebraic group of type $C_4$.
	\end{compactenum}
\end{theorem}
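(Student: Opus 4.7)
My plan is to establish the three statements in order, using explicit Cayley--Dickson calculations for parts (i) and (ii) and invoking the classification of isotropic classical groups for part (iii). For part (i), I work with the splitting $\oct = D \oplus \gamma D$ with $\gamma \in D^{\perp}$, $N(\gamma) = \beta$, and view $\oct$ as a right $D$-module by letting $D$ act diagonally on the two summands. The formula $h(a,b) = \overline{a_1}b_1 + \beta\,\overline{a_2} b_2$ is then visibly right $D$-linear in its second argument, while conjugate symmetry $h(b,a) = \overline{h(a,b)}$ follows from the fact that the standard involution of $D$ is an anti-involution and $\beta \in k$ is central.

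For part (ii), I construct an explicit isomorphism $\Phi$ between $(U,+)$ and the hermitian group $(T, \cdot)$ of section~\ref{se:herm}, with data $K = D$, $\sigma$ the standard quaternion involution, $V = \oct$, and $q(v) := \tfrac{1}{2} h(v,v)$ (legitimate since $\ch(k) \neq 2$, which also makes $K^{-}_{\sigma}$ the trace-zero part of $D$). Writing $a = a_1 + \gamma a_2$ and $b = b_1 + \gamma b_2$, one expands $\eta(\overline{a}) \cdot a$ and $\eta(\overline{b}) + b$ using the Cayley--Dickson multiplication: the $D$-component of $\eta(\overline{a}) \cdot a$ works out to $h(a,a)$, while $\eta(\overline{b}) + b = T_D(b_1) + 2 \gamma b_2$. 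The defining equation of $U$ therefore splits into a $D$-part $b_1 + \overline{b_1} = -h(a,a)$ and a $\gamma D$-part that uniquely determines $b_2$ in terms of $a$. Setting $t := -b_1$, the $D$-equation becomes precisely $q(a) - t \in K^{-}_{\sigma}$, so $\Phi \colon (a, b) \mapsto (a, t)$ is the sought bijection. It then remains to check that the twisted correction $-\eta(\overline{c})\,a$ in the group law of $U$ contributes $-h(c, a)$ to the $b_1$-coordinate (matching the $h(a', a)$ term in $(a,t) \cdot (a',t') = (a+a', t + t' + h(a', a))$), and that $\tau$ of equation~\eqref{eq:tauIII} transports to the hermitian $\tau$ of equation~\eqref{eq:tauherm}; both are Cayley--Dickson computations parallel to the above.

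For part (iii), I invoke the classification of classical groups: the pseudoquadratic space $(V, q)$ of anisotropic $D$-dimension $2$ over the quaternion division algebra $(D, \sigma)$, with $\sigma$ the standard (symplectic-type) involution, corresponds, after adjoining one hyperbolic plane, to a hermitian form of Witt index $1$ on a $D$-space of dimension $4$; its isometry group is an isotropic $k$-form of $\mathrm{Sp}_8$, of absolute type $C_4$ and relative rank $1$, whose associated Moufang set is exactly $\mouf(U, \tau)$. The main obstacle is the algebraic bookkeeping in part (ii): non-associativity of $\oct$ makes the Cayley--Dickson computation of $\eta(\overline{c})\,a$ delicate, and one must pin down sign and order conventions carefully so that its $D$-component matches $h(c, a)$ exactly---otherwise the group law of $T$ fails to transport faithfully to that of $U$.
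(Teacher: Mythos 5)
Your outline for (i) and (iii) is essentially the paper's, but part (ii) contains a computational error that breaks the proposed isomorphism. Since $c\in D^{\perp}$ has trace zero, $c^2=-N(c)=-\beta$ and more generally $(ca_2)^2=-\beta N(a_2)$; hence the $D$-component of $\eta(\overline{a})\cdot a=(\overline{a_1}+ca_2)(a_1+ca_2)$ is $N(a_1)-\beta N(a_2)$, \emph{not} $h(a,a)=N(a_1)+\beta N(a_2)$. The defining equation of $U$ therefore splits as $b_1+\overline{b_1}=-(N(a_1)-\beta N(a_2))$, so your map $\Phi\colon (a,b)\mapsto(a,-b_1)$ does not land in $T$: membership in $T$ forces $T(t)=2q(a)=N(a_1)+\beta N(a_2)$, which differs from $T(-b_1)=N(a_1)-\beta N(a_2)$ by $2\beta N(a_2)\neq 0$ whenever $a_2\neq 0$. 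The same issue recurs in the group law: the $D$-component of $\eta(\overline{c})\cdot a$ is $\overline{c_1}a_1-\beta a_2\overline{c_2}$, which differs from $h(c,a)=\overline{c_1}a_1+\beta\overline{c_2}a_2$ both in sign and in the order/conjugation of the quaternion factors. In effect your computation identifies $U$ with the hermitian group of the form $\langle 1,-\beta\rangle$ rather than the form $\langle 1,\beta\rangle$ appearing in the statement, so the isomorphism you claim is not established. (You correctly flag that the sign and order conventions are the delicate point, but the outcome you assert is the wrong one.)

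The paper's proof repairs exactly this by a less naive map: $\chi\colon T\to U\colon (a_1,a_2,t)\mapsto \bigl((a_1,\overline{a_2}),(-t+\beta N(a_2),-a_1\overline{a_2})\bigr)$, which both conjugates the $cD$-coordinate and shifts the $t$-coordinate by $\beta N(a_2)$; note that $T(-t+\beta N(a_2))=-(N(a_1)+\beta N(a_2))+2\beta N(a_2)=-(N(a_1)-\beta N(a_2))$, which is precisely the $U$-membership condition, and the conjugation on $a_2$ is what makes the cross term $-\beta\,\overline{a_2}b_2$ match the hermitian term after expanding $N(a_2+b_2)$. You would need to build both corrections into $\Phi$ before the compatibility of the $\tau$ of equation~\eqref{eq:tauIII} with the hermitian $\tau$ of equation~\eqref{eq:tauherm} can even be checked (the paper does this using the identity $N(-t+\beta N(a_2))+\beta N(-a_1\overline{a_2})=N(t)$). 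The remainder of your plan --- the observation that $q=\tfrac12 N_{\oct}$, and deducing type $C_4$ of relative rank one from the classification of classical groups after adjoining a hyperbolic plane --- agrees with the paper, which simply cites Tits' Boulder tables for (iii).
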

\begin{proof}
\begin{compactenum}[\rm (i)]
    \item
	This is obvious from the definitions.
	Note that the corresponding hermitian pseudoquadratic form $q$ is equal to $\tfrac{1}{2} N_\oct$:
	\[ q(a_1 + ca_2) = \tfrac{1}{2} \bigl(\N(a_1) + \beta \N(a_2) \bigr) . \]
    \item
	We will denote an element $a_1 + c a_2 \in \oct$ as $(a_1, a_2)$, where $a_1,a_2 \in D$.
	Observe that the multiplication in $\oct$ with respect to this decomposition is given by
	\[ (a_1, a_2) (b_1, b_2) = \bigl( a_1 b_1 - \beta b_2 \overline{a_2}, \, b_1 a_2 + \overline{a_1} b_2 \bigr) . \]
	Now let $a=(a_1,a_2)$ and $b=(b_1,b_2)$;
	then the condition
	\[ \eta(\overline{a})\cdot a+\eta(\overline{b}) + b = 0 \]
	occuring in equation~\eqref{eq:U} can be rewritten as the system of the following two equations:
	\begin{align*}
	& \overline{a_1}\cdot a_1-\beta \overline{a_2}\cdot a_2 +\overline{b_1}+b_1=0\quad \text{and}\\
	& a_1 a_2+b_2=0.
	\end{align*}
	Now consider the group $T = \{(a,t)\in \oct \times D \mid q(a) - t \in D^{-}_{\sigma}\}$
	as defined in section~\ref{se:herm}, with group operation $(a,t) \cdot (b,u) := (a+b,t+u+h(b,a))$.
	Since $\ch(k) \neq 2$, the space $D^-_\sigma$ is precisely the subspace of trace zero elements of $D$, and hence
	\[ T = \bigl\{ (a_1, a_2, t) \in D \times D \times D \mid \N(a_1) + \beta \N(a_2) = \T(t) \bigr\} , \]
	with
	\[ (a_1, a_2, t) \cdot (b_1,b_2, u) = \bigl( a_1 + b_1, a_2 + b_2, t + u + \overline{b_1} a_1 + \beta \overline{b_2} a_2 \bigr) . \]
	It turns out that the map
	\[ \chi \colon T \to U \colon (a_1, a_2, t) \mapsto \bigl( (a_1, \overline{a_2}), (-t + \beta \N(a_2), -a_1 \overline{a_2}) \bigr) \]
	is a group isomorphism.

	It remains to check that the map $\tau$ given by equation~\eqref{eq:tauIII} corresponds to the map $\tau$ given by equation~\eqref{eq:tauherm}
	under the isomorphism $\chi$.
	This follows from another short calculation, keeping in mind that
	\[ \N\bigl( -t + \beta \N(a_2) \bigr) + \beta \N\bigl( -a_1 \overline{a_2} \bigr) = N(t) \]
	for all $(a_1, a_2, t) \in T$.
	We leave the details to the reader.
% 	We notice that $b_2$ can be expressed in terms of $a_1$ and $a_2$, which means we can rewrite the set $U$ and the addition on $U$ as
% 	\[
% 	U = \{(a_1,a_2,t)\in D\times D\times D\mid \overline{a_1} a_1-\beta \overline{a_2}a_2+t+\overline{t}=0\}
% 	\]
% 	such that
% 	\[
% 	(a_1,a_2,t)+(b_1,b_2,u)=(a_1+b_1,a_2+b_2,t+u-\overline{b_1}\cdot a_1+\beta \overline{a_2}\cdot b_2)
% 	\]
% 	for all $(a_1,a_2,t),(b_1,b_2,u)\in U$.
% 	Now it easily follows there is a bijective correspondence with the set $T$ 
% 	\[
% 	T=\{(a_1,a_2,t')\mid (a_1,a_2,t)\in U\quad \text{and}\quad t'=\beta\overline{a_2}\cdot a_2-t\}
% 	\]
% 	and the addition on $U$ becomes on $T$ 
% 	\[
% 	(a_1,a_2,t')+(b_1,b_2,u')=(a_1+b_1,a_2+b_2,t'+u'+\overline{b_1}\cdot a_1+\beta\overline{b_2}\cdot a_2\}
% 	\]
% 	for all $(a_1,a_2,t'),(b_1b_2,u')\in T$.\\
% 	Finally some calculations show that the permutation on $U$ gets the following (nice) form on $T$
% 	\[
% 	\tau \colon T\to T \colon (a,t)\mapsto (at^{-1},t^{-1})
% 	\]
% 	with $a\in\oct$.\\
% 	After this transformation we immediately see the above set with addition and permutation is a hermitian Moufang set with hermitian form
% 	\[
% 	h \colon \oct\times \oct \colon  (a,b)\mapsto \overline{a_1}\cdot b_1+\beta\overline{a_2}\cdot{b_2} 
% 	\]
% 	and quadratic form $Q=\frac{1}{2}q$ with $q$ the quadratic form of $\oct$.
    \item
	By (ii), this now follows, for instance, from \cite[p. 56]{Boulder}.
    \qedhere
\end{compactenum}
\end{proof}

\subsection{Polarities of type IV}\label{ss:polIV}

We finally assume that $\Psi$ is a polarity of type IV; in particular $\ch(k) = 2$.
Again, $U$ and $\tau$ are given by equations~\eqref{eq:U} and~\eqref{eq:tau}, respectively.
In this case, however, the group $U$ takes a very simple form.
Indeed, it follows from \cite[Theorem 7.3]{knarrstroppel} that
\[
U = \bigl\{(0,y)\mid y\in\oct,\, \eta(\overline{y})=y\bigr\},
\]
and hence $U$ becomes an abelian group, with
\[ (0, y_1) + (0, y_2) = (0, y_1 + y_2) \]
for all $(0, y_1), (0, y_2) \in U$.
The map $\tau$ is now simply given by
\[ \tau(0,y) = (0, y^{-1}) \]
for all $(0,y) \in U$.

Suppose $D$ is the $4$-dimensional subfield of $\oct$ fixed by $\eta$.
Then \cite[Theorem~3.1]{knarrstroppel} tells us that the set of elements fixed under
$x \mapsto \eta(\overline{x})$ is the set $\Fix(\eta)\oplus kz = D \oplus kz$, with $z$ as in section \ref{ss:IV}.
Observe that the map $\tau$ does indeed preserve the subset $U^*$.

We conclude that the Moufang set associated to $\Psi$ is (isomorphic to) a Moufang subset of the projective Moufang set $\mathbb{M}(\oct)$
over the octonion division algebra $\oct$, as defined in section~\ref{projectivemoufangset};
the root groups are $5$-dimensional subspaces of the $8$-dimensional vector space $\oct$ over $k$.
% We thus find for each octonion division algebra $\oct$ a $5$-dimensional subvectorspace over $k$ as Moufang subset of $\mathbb{M}(\oct)$.

%How can one obtain the group action? In the hermitian case ok, because H subgroup of G(full collineation group)?
%Group of all projectivities that commute with the polarity subgroup of the group of group of all projectivities of the plane? Is this an algebraic group, which one?

%%%%%%%%%%%%%%%%%%%%%%%%%%%%%%%%%%%%%%%%%%%%%%%%%%%%%%%%%%%%%%%%%%%%%%%%%%%%%%%%%%%%%%%%%%%%%%%%%%%%%%%%%%%%%%%%%%
%                                                                                                                %
%  THE BIBLIOGRAPHY                                                                                              %
%                                                                                                                %
%%%%%%%%%%%%%%%%%%%%%%%%%%%%%%%%%%%%%%%%%%%%%%%%%%%%%%%%%%%%%%%%%%%%%%%%%%%%%%%%%%%%%%%%%%%%%%%%%%%%%%%%%%%%%%%%%%

\bigskip\hrule\bigskip
\small

\noindent
Elizabeth Callens \\
Ghent University, Dept.\@ of Mathematics \\
Krijgslaan 281 (S22), B-9000 Gent, Belgium \\
{\tt elcallen@cage.UGent.be}

\medskip

\noindent
Tom De Medts \\
Ghent University, Dept.\@ of Mathematics \\
Krijgslaan 281 (S22), B-9000 Gent, Belgium \\
{\tt tdemedts@cage.UGent.be}

\end{document}